\newtheorem{theorem}{Theorem}[section]
\newtheorem{proposition}[theorem]{Proposition}
\theoremstyle{definition}
\newtheorem{example}[theorem]{Example}
\theoremstyle{remark}
\numberwithin{equation}{section}
\begin{document}

\title{A Double Chebyshev Series: Derivation And Evaluation}


\author{Robert Reynolds}
\address[Robert Reynolds]{Department of Mathematics and Statistics, York University, Toronto, ON, Canada, M3J1P3}
\email[Corresponding author]{milver@my.yorku.ca}
\thanks{}

\author{ Allan Stauffer}
\address[Allan Stauffer]{Department of Mathematics and Statistics, York University, Toronto, ON, Canada, M3J1P3}
\email{stauffer@yorku.ca}
\thanks{This research is supported by NSERC Canada under Grant 504070}

\subjclass[2020]{Primary  30E20, 33-01, 33-03, 33-04}

\keywords{Chebyshev polynomials, generating functions, Cauchy integral, double series}

\date{}

\dedicatory{}

\begin{abstract}
In this paper we use a contour integral method to derive a generating function in the form of a double series involving the product of two Chebyshev polynomials over generalized independent indices expressed in terms of the incomplete gamma function. The generating function represents a more generalized form relative to current literature. A possible application of this function to solving partial differential equations is discussed and some special cases of this generating function are derived. The work involved in the computation of this generating function is easier relative to previous methods as we have a closed form solution as opposed to numerical methods.
\end{abstract}

\maketitle
\section{Introdcution}
A double Chebyshev series is one that has two univariate Chebyshev polynomials $T_{n,p}(\alpha,\beta)=T_{n}(\alpha)T_{p}(\beta)$ as products in each term of the series, where $T_{n}(\alpha)=\cos(n\theta)$ and $\alpha=\cos(\theta)$. Literature on Chebyshev polynomials are presented in great detail in section 18 in \cite{dlmf}. These polynomials have well known generating functions in current literature and are used widely in all areas of mathematics and science. A particular popular application of these generating functions is in the solution of partial differential equations. Famous mathematicians like Lanczos \cite{lanczos} used the bivariate form  of these generating functions along with their strong convergence properties in solving ordinary differential equations. In the work by Mason \cite{mason} the bivariate form of Chebyshev polynomials was employed in studying polynomial approximation. \\\\
In this work we provide a closed form solution to the double Chebyshev series in terms of the incomplete gamma function. This in our humble opinion is perhaps a much better approach relative to previously published work. Previously published work involving the evaluation of a double Chebyshev polynomial featured approximations and power series expansions. In section 2, the Chebyshev contour integral formula is derived. In section 3, we give a detailed account of the incomplete gamma including integral and summation definitions. In section 4, the contour integral representations for the incomplete gamma function are derived. section 5, we formulate the main theorem and deduce a few propositions and examples in terms of constant and composite functions. In section 6, we look at the limiting case of the difference of the bivariate Chebyshev polynomials expressed in terms of the incomplete gamma function.\\\\

Our preliminaries start with the contour integral method \cite{reyn4}, applied to the formula for the Chebyshev generating function given by equation (18.12.8) in \cite{dlmf}. Let $a$, $\alpha$, $\beta$, $k$ and $w$ be general complex numbers, $n\in[0,\infty)$ and $p\in[0,\infty)$, where the contour integral form of the Chebyshev generating function is given by
\begin{multline}\label{eq:cheby_contour}
\frac{1}{2\pi i}\int_{C}\sum_{n,p \geq 0}T_n(\alpha ) T_p(\beta ) a^w w^{-k+n+p-1}dw\\
=\frac{1}{2\pi i}\int_{C}\frac{a^w w^{-k-1} (1-\alpha  w) (1-\beta  w)}{\left(w^2-2 \alpha  w+1\right) \left(w^2-2 \beta  w+1\right)}dw
\end{multline}
where $|Re(w)|<1$. We will use equation (\ref{eq:cheby_contour}) to derive equivalent sums for the left-hand side and a Special function form for the right-hand side.
The derivation of the summation follows the method used by us in \cite{reyn4} which involves Cauchy's integral formula. The generalized Cauchy's integral formula is given by

\begin{equation}\label{intro:cauchy}
\frac{y^k}{\Gamma(k+1)}=\frac{1}{2\pi i}\int_{C}\frac{e^{wy}}{w^{k+1}}dw.
\end{equation}
where $C$ is in general, an open contour in the complex plane where the bilinear concomitant has the same value at the end points of the contour. This method involves using a form of equation (\ref{intro:cauchy}) then multiply both sides by a function, then take a definite sum of both sides. This yields a definite sum in terms of a contour integral. A second contour integral is derived by multiplying equation (\ref{intro:cauchy}) by a function and performing some substitutions so that the contour integrals are the same.
\section{The Left-Hand Side Contour Integral}
In this section we derive the infinite sum representation involving the product of two generalized Chebyshev polynomials over independent indices for the left-hand side of equation (\ref{eq:cheby_contour}). Using a generalization of Cauchy's integral formula (\ref{intro:cauchy}), first replace $y \to \log (a),k\to k-n-p$ then multiply both sides by $T_n(\alpha ) T_p(\beta )$ and take the sums over $n\in[0,\infty)$ and $p\in [0,\infty)$ and simplify to get
\begin{multline}\label{eq:lhsci}
\sum_{n,p\geq 0}\frac{T_n(\alpha ) T_p(\beta ) \log ^{k-n-p}(a)}{\Gamma(k-n-p+1)}\\
=\frac{1}{2\pi i}\sum_{n,p\geq 0}\int_{C}a^w T_n(\alpha ) T_p(\beta ) w^{-k+n+p-1}dw\\
=\frac{1}{2\pi i}\int_{C}\sum_{n,p\geq 0}a^w T_n(\alpha ) T_p(\beta ) w^{-k+n+p-1}dw\\
=\frac{1}{2\pi i}\int_{C}\frac{a^w w^{-k-1} (1-\alpha  w) (1-\beta  w)}{\left(w^2-2 \alpha  w+1\right) \left(w^2-2 \beta  w+1\right)}dw
\end{multline}
from equation (\ref{eq:cheby_contour}) where $|Re(w)|<1$ and $Im(w)>0$ in order for the sums to converge. Apply Tonelli's theorem for multiple sums, see page 177 in \cite{gelca} as the summands are of bounded measure over the space $\mathbb{C} \times [0,\infty) \times [0,\infty) $.
\section{The Incomplete Gamma~Function}

The incomplete gamma functions~\cite{dlmf}, $\gamma(a,z)$ and $\Gamma(a,z)$, are defined by
\begin{equation}
\gamma(a,z)=\int_{0}^{z}t^{a-1}e^{-t}dt
\end{equation}
and
\begin{equation}
\Gamma(a,z)=\int_{z}^{\infty}t^{a-1}e^{-t}dt
\end{equation}
where $Re(a)>0$. The~incomplete gamma function has a recurrence relation given by
\begin{equation}
\gamma(a,z)+\Gamma(a,z)=\Gamma(a)
\end{equation}
where $a\neq 0,-1,-2,..$. The~incomplete gamma function is continued analytically by
\begin{equation}
\gamma(a,ze^{2m\pi i})=e^{2\pi mia}\gamma(a,z)
\end{equation}
and
\begin{equation}\label{eq:7}
\Gamma(a,ze^{2m\pi i})=e^{2\pi mia}\Gamma(a,z)+(1-e^{2\pi m i a})\Gamma(a)
\end{equation}
where $m\in\mathbb{Z}$, $\gamma^{*}(a,z)=\frac{z^{-a}}{\Gamma(a)}\gamma(a,z)$ is entire in $z$ and $a$. When $z\neq 0$, $\Gamma(a,z)$ is an entire function of $a$ and $\gamma(a,z)$ is meromorphic with simple poles at $a=-n$ for $n=0,1,2,...$ with residue $\frac{(-1)^n}{n!}$. These definitions are listed in Section~8.2(i) and (ii) in~\cite{dlmf}.
\section{Derivation Of The Incomplete Gamma Function Contour Integral Representations}
In this section we derive the general case of the Incomplete Gamma function in terms of the Cauchy contour integral. This formula will be used in the proceeding section to derive the equivalent Incomplete Gamma function contour integral representations for the right-hand side of equation (\ref{eq:cheby_contour}). Using a generalization of Cauchy's integral formula (\ref{intro:cauchy}), first replace $y \to y+\log (a)$ then multiply both sides by $e^{xy}$ and take the definite integral over $y\in[0,\infty)$ and simplify to get
\begin{equation}\label{eq:igf}
\frac{a^{-x} (-x)^{-k-1} \Gamma (k+1,-x \log (a))}{\Gamma(k+1)}=-\frac{1}{2\pi i}\int_{C}\frac{a^w
   w^{-k-1}}{w+x}dw
\end{equation}
from equation (3.462.12) in \cite{grad} where $Re(x)<0, \left|\arg \log(a) \right|< \pi$.
\subsection{Derivation Of The Right-Hand Side Contour Integral Representations}
In this sub-section we derive the Incomplete Gamma function in terms of the contour integral representation for the right-hand side of equation (\ref{eq:cheby_contour}). These formulae are achieved by analyzing the right-hand side of equation (\ref{eq:cheby_contour}) and decomposing the quotient of polynomials into partial fractions and applying a definite integral.
\subsubsection{Right-Hand Side First Contour Integral}
Use equation (\ref{eq:igf}) and replace $x \to -\alpha -i \sqrt{1-\alpha ^2},k\to k+1$ and multiply both sides by $\frac{i}{4 \sqrt{1-\alpha ^2} (\alpha -\beta )}$ and simplify to get
\begin{multline}\label{eq:rhs1}
\frac{i a^{\alpha +i \sqrt{1-\alpha ^2}} \left(\alpha +i \sqrt{1-\alpha ^2}\right)^{-k-2} \Gamma \left(k+2,-\left(\left(-\alpha -i \sqrt{1-\alpha ^2}\right) \log
   (a)\right)\right)}{4 \sqrt{1-\alpha ^2} \Gamma(k+2) (\alpha -\beta )}\\
   =-\frac{1}{2\pi i}\int_{C}\frac{i a^w w^{-k-2}}{4 \sqrt{1-\alpha ^2} (\alpha -\beta ) \left(-i \sqrt{1-\alpha ^2}-\alpha +w\right)}dw
\end{multline}
\subsubsection{Right-Hand Side Second Contour Integral}
Use equation (\ref{eq:rhs1}) and replace $k\to k-1$ and multiply both sides by $-\frac{i (\alpha +\beta )}{4 \sqrt{1-\alpha ^2} (\alpha -\beta )}$ and simplify to get
\begin{multline}\label{eq:rhs2}
-\frac{i t a^{\alpha +i \sqrt{1-\alpha ^2}} \left(\alpha +i \sqrt{1-\alpha ^2}\right)^{-k-1} \Gamma \left(k+1,-\left(\left(-\alpha -i \sqrt{1-\alpha ^2}\right) \log
   (a)\right)\right)}{4 \sqrt{1-\alpha ^2} \Gamma(k+1) (\alpha -\beta )}\\
   =\frac{1}{2\pi i}\int_{C}\frac{i t a^w w^{-k-1}}{4 \sqrt{1-\alpha ^2} (\alpha -\beta ) \left(-i \sqrt{1-\alpha ^2}-\alpha +w\right)}dw
\end{multline}
\subsubsection{Right-Hand Side Third Contour Integral}
Use equation (\ref{eq:igf}) and replace $x \to -\alpha +i \sqrt{1-\alpha ^2}, k\to k+1$ and multiply both sides by $-\frac{i}{4 \sqrt{1-\alpha ^2} (\alpha -\beta )}$ and simplify to get
\begin{multline}\label{eq:rhs3}
-\frac{i a^{\alpha -i \sqrt{1-\alpha ^2}} \left(\alpha -i \sqrt{1-\alpha ^2}\right)^{-k-2} \Gamma \left(k+2,-\left(\left(i \sqrt{1-\alpha ^2}-\alpha \right) \log
   (a)\right)\right)}{4 \sqrt{1-\alpha ^2} \Gamma(k+2) (\alpha -\beta )}\\
   =\frac{1}{2\pi i}\int_{C}\frac{i a^w w^{-k-2}}{4 \sqrt{1-\alpha ^2} (\alpha -\beta ) \left(i \sqrt{1-\alpha ^2}-\alpha +w\right)}dw
\end{multline}
\subsubsection{Right-Hand Side Fourth Contour Integral}
Use equation (\ref{eq:igf}) and replace $x \to -\alpha +i \sqrt{1-\alpha ^2}$ and multiply both sides by $-\frac{i (\alpha +\beta )}{4 \sqrt{1-\alpha ^2} (\alpha -\beta )}$ and simplify to get
\begin{multline}\label{eq:rhs4}
\frac{i a^{\alpha -i \sqrt{1-\alpha ^2}} (\alpha +\beta ) \left(\alpha -i \sqrt{1-\alpha ^2}\right)^{-k-1} \Gamma \left(k+1,-\left(\left(i \sqrt{1-\alpha ^2}-\alpha \right) \log
   (a)\right)\right)}{4 \sqrt{1-\alpha ^2} \Gamma(k+1) (\alpha -\beta )}\\
   =-\frac{1}{2\pi i}\int_{C}\frac{i (\alpha +\beta ) a^w w^{-k-1}}{4 \sqrt{1-\alpha ^2} (\alpha -\beta ) \left(i \sqrt{1-\alpha ^2}-\alpha
   +w\right)}dw
\end{multline}
\subsubsection{Right-Hand Side Fifth Contour Integral}
Use equation (\ref{eq:igf}) and replace $x \to -\alpha -i \sqrt{1-\alpha ^2}, k \to k-1$ and multiply both sides by $\frac{i \alpha  \beta }{4 \sqrt{1-\alpha ^2} (\alpha -\beta )}$ and simplify to get
\begin{multline}\label{eq:rhs5}
\frac{i \alpha  \beta  a^{\alpha +i \sqrt{1-\alpha ^2}} \left(\alpha +i \sqrt{1-\alpha ^2}\right)^{-k} \Gamma \left(k,-\left(\left(-\alpha -i \sqrt{1-\alpha ^2}\right) \log
   (a)\right)\right)}{4 \sqrt{1-\alpha ^2} \Gamma(k) (\alpha -\beta )}\\
   =-\frac{1}{2\pi i}\int_{C}\frac{i \alpha  \beta  a^w w^{-k}}{4 \sqrt{1-\alpha ^2} (\alpha -\beta ) \left(-i \sqrt{1-\alpha ^2}-\alpha
   +w\right)}dw
\end{multline}
\subsubsection{Right-Hand Side Sixth Contour Integral}
Use equation (\ref{eq:igf}) and replace $x \to -\alpha +i \sqrt{1-\alpha ^2}, k \to k-1$ and multiply both sides by $-\frac{i \alpha  \beta }{4 \sqrt{1-\alpha ^2} (\alpha -\beta )}$ and simplify to get
\begin{multline}\label{eq:rhs6}
-\frac{i \alpha  \beta  a^{\alpha -i \sqrt{1-\alpha ^2}} \left(\alpha -i \sqrt{1-\alpha ^2}\right)^{-k} \Gamma \left(k,-\left(\left(i \sqrt{1-\alpha ^2}-\alpha \right) \log
   (a)\right)\right)}{4 \sqrt{1-\alpha ^2} \Gamma(k) (\alpha -\beta )}\\
   =\frac{1}{2\pi i}\int_{C}\frac{i \alpha  \beta  a^w w^{-k}}{4 \sqrt{1-\alpha ^2} (\alpha -\beta ) \left(i \sqrt{1-\alpha ^2}-\alpha
   +w\right)}dw
\end{multline}
\subsubsection{Right-Hand Side Seventh Contour Integral}
Use equation (\ref{eq:igf}) and replace $x \to -\beta -i \sqrt{1-\beta ^2}, k \to k+1$ and multiply both sides by $-\frac{i}{4 \sqrt{1-\beta ^2} (\alpha -\beta )}$ and simplify to get
\begin{multline}\label{eq:rhs7}
-\frac{i a^{\beta +i \sqrt{1-\beta ^2}} \left(\beta +i \sqrt{1-\beta ^2}\right)^{-k-2} \Gamma \left(k+2,-\left(\left(-\beta -i \sqrt{1-\beta ^2}\right) \log (a)\right)\right)}{4
   \sqrt{1-\beta ^2} \Gamma(k+2) (\alpha -\beta )}\\
   =\frac{1}{2\pi i}\int_{C}\frac{i a^w w^{-k-2}}{4 \sqrt{1-\beta ^2} (\alpha -\beta ) \left(-i \sqrt{1-\beta ^2}-\beta +w\right)}dw
\end{multline}
\subsubsection{Right-Hand Side Eighth Contour Integral}
Use equation (\ref{eq:igf}) and replace $x \to -\beta -i \sqrt{1-\beta ^2}$ and multiply both sides by $\frac{i (\alpha +\beta )}{4 \sqrt{1-\beta ^2} (\alpha -\beta )}$ and simplify to get
\begin{multline}\label{eq:rhs8}
\frac{i t a^{\beta +i \sqrt{1-\beta ^2}} \left(\beta +i \sqrt{1-\beta ^2}\right)^{-k-1} \Gamma \left(k+1,-\left(\left(-\beta -i \sqrt{1-\beta ^2}\right) \log (a)\right)\right)}{4
   \sqrt{1-\beta ^2} \Gamma(k+1) (\alpha -\beta )}\\
   =-\frac{1}{2\pi i}\int_{C}\frac{i t a^w w^{-k-1}}{4 \sqrt{1-\beta ^2} (\alpha -\beta ) \left(-i \sqrt{1-\beta ^2}-\beta +w\right)}dw
\end{multline}
\subsubsection{Right-Hand Side Ninth Contour Integral}
Use equation (\ref{eq:igf}) and replace $x \to -\beta -i \sqrt{1-\beta ^2}, k \to k-1$ and multiply both sides by $\frac{i \alpha  \beta }{4 \sqrt{1-\beta ^2} (\alpha -\beta )}$ and simplify to get
\begin{multline}\label{eq:rhs9}
-\frac{i \alpha  \beta  a^{\beta +i \sqrt{1-\beta ^2}} \left(\beta +i \sqrt{1-\beta ^2}\right)^{-k} \Gamma \left(k,-\left(\left(-\beta -i \sqrt{1-\beta ^2}\right) \log
   (a)\right)\right)}{4 \sqrt{1-\beta ^2} \Gamma(k) (\alpha -\beta )}\\
   =\frac{1}{2\pi i}\int_{C}\frac{i \alpha  \beta  a^w w^{-k}}{4 \sqrt{1-\beta ^2} (\alpha -\beta ) \left(-i \sqrt{1-\beta ^2}-\beta
   +w\right)}dw
\end{multline}
\subsubsection{Right-Hand Side Tenth Contour Integral}
Use equation (\ref{eq:igf}) and replace $x \to -\beta +i \sqrt{1-\beta ^2}, k \to k+1$ and multiply both sides by $\frac{i}{4 \sqrt{1-\beta ^2} (\alpha -\beta )}$ and simplify to get
\begin{multline}\label{eq:rhs10}
\frac{i a^{\beta -i \sqrt{1-\beta ^2}} \left(\beta -i \sqrt{1-\beta ^2}\right)^{-k-2} \Gamma \left(k+2,-\left(\left(i \sqrt{1-\beta ^2}-\beta \right) \log (a)\right)\right)}{4
   \sqrt{1-\beta ^2} \Gamma(k+2) (\alpha -\beta )}\\
   =-\frac{1}{2\pi i}\int_{C}\frac{i a^w w^{-k-2}}{4 \sqrt{1-\beta ^2} (\alpha -\beta ) \left(i \sqrt{1-\beta ^2}-\beta +w\right)}dw
\end{multline}
\subsubsection{Right-Hand Side Eleventh Contour Integral}
Use equation (\ref{eq:igf}) and replace $x \to -\beta +i \sqrt{1-\beta ^2}$ and multiply both sides by $-\frac{i (\alpha +\beta )}{4 \sqrt{1-\beta ^2} (\alpha -\beta )}$ and simplify to get
\begin{multline}\label{eq:rhs11}
-\frac{i a^{\beta -i \sqrt{1-\beta ^2}} (\alpha +\beta ) \left(\beta -i \sqrt{1-\beta ^2}\right)^{-k-1} \Gamma \left(k+1,-\left(\left(i \sqrt{1-\beta ^2}-\beta \right) \log
   (a)\right)\right)}{4 \sqrt{1-\beta ^2} \Gamma(k+1) (\alpha -\beta )}\\
   =\frac{1}{2\pi i}\int_{C}\frac{i (\alpha +\beta ) a^w w^{-k-1}}{4 \sqrt{1-\beta ^2} (\alpha -\beta ) \left(i \sqrt{1-\beta ^2}-\beta
   +w\right)}dw
\end{multline}
\subsubsection{Right-Hand Side Twelfth Contour Integral}
Use equation (\ref{eq:igf}) and replace $x \to -\beta +i \sqrt{1-\beta ^2}, k\to k-1$ and multiply both sides by $\frac{i \alpha  \beta }{4 \sqrt{1-\beta ^2} (\alpha -\beta )}$ and simplify to get
\begin{multline}\label{eq:rhs12}
\frac{i \alpha  \beta  a^{\beta -i \sqrt{1-\beta ^2}} \left(\beta -i \sqrt{1-\beta ^2}\right)^{-k} \Gamma \left(k,-\left(\left(i \sqrt{1-\beta ^2}-\beta \right) \log
   (a)\right)\right)}{4 \sqrt{1-\beta ^2} \Gamma(k) (\alpha -\beta )}\\
   =-\frac{1}{2\pi i}\int_{C}\frac{i \alpha  \beta  a^w w^{-k}}{4 \sqrt{1-\beta ^2} (\alpha -\beta ) \left(i \sqrt{1-\beta ^2}-\beta
   +w\right)}dw
\end{multline}
\section{Main Results}
In this section we develop a theorem, propositions and  examples to demonstrate the potential application of the double Chebyshev series. 
\begin{theorem}
For all $a,k,\alpha,\beta\in\mathbb{C}$ then,
\begin{multline}\label{eq:cheby}
\sum_{n,p\geq 0}\frac{ T_n(\alpha ) T_p(\beta )}{(a\pi)^{n+p} (k)_{-n-p+1}}
=\frac{1  }{4 i k (k+1) (a\pi)^{k}\sqrt{1-\alpha ^2} (\alpha -\beta ) \sqrt{1-\beta
   ^2}}\\
    \left(e^{a \pi  \left(\alpha -i \sqrt{1-\alpha ^2}\right)} \sqrt{1-\beta ^2} \Gamma
   \left(k+2,a \pi  \left(\alpha -i \sqrt{1-\alpha ^2}\right)\right) \left(\alpha -i \sqrt{1-\alpha ^2}\right)^{-k-2}\right.\\
   \left.-e^{a \pi  \left(\alpha -i \sqrt{1-\alpha ^2}\right)} (k+1) \alpha 
   \sqrt{1-\beta ^2} \Gamma \left(k+1,a \pi  \left(\alpha -i \sqrt{1-\alpha ^2}\right)\right) \left(\alpha -i \sqrt{1-\alpha ^2}\right)^{-k-1}\right.\\
   \left.-e^{a \pi  \left(\alpha -i \sqrt{1-\alpha
   ^2}\right)} (k+1) \beta  \sqrt{1-\beta ^2} \Gamma \left(k+1,a \pi  \left(\alpha -i \sqrt{1-\alpha ^2}\right)\right) \left(\alpha -i \sqrt{1-\alpha ^2}\right)^{-k-1}\right.\\
   \left.+e^{a \pi 
   \left(\alpha -i \sqrt{1-\alpha ^2}\right)} k (k+1) \alpha  \beta  \sqrt{1-\beta ^2} \Gamma \left(k,a \pi  \left(\alpha -i \sqrt{1-\alpha ^2}\right)\right) \left(\alpha -i \sqrt{1-\alpha
   ^2}\right)^{-k}\right.\\
   \left.-e^{a \pi  \left(\alpha +i \sqrt{1-\alpha ^2}\right)} k (k+1) \alpha  \left(\alpha +i \sqrt{1-\alpha ^2}\right)^{-k} \beta  \sqrt{1-\beta ^2} \Gamma \left(k,a \pi 
   \left(\alpha +i \sqrt{1-\alpha ^2}\right)\right)\right.\\
   \left.-e^{a \pi  \left(\beta -i \sqrt{1-\beta ^2}\right)} k (k+1) \alpha  \sqrt{1-\alpha ^2} \beta  \left(\beta -i \sqrt{1-\beta
   ^2}\right)^{-k} \Gamma \left(k,a \pi  \left(\beta -i \sqrt{1-\beta ^2}\right)\right)\right.\\
   \left.+e^{a \pi  \left(\beta +i \sqrt{1-\beta ^2}\right)} k (k+1) \alpha  \sqrt{1-\alpha ^2} \beta 
   \left(\beta +i \sqrt{1-\beta ^2}\right)^{-k} \Gamma \left(k,a \pi  \left(\beta +i \sqrt{1-\beta ^2}\right)\right)\right.\\
   \left.+e^{a \pi  \left(\alpha +i \sqrt{1-\alpha ^2}\right)} (k+1) \alpha 
   \left(\alpha +i \sqrt{1-\alpha ^2}\right)^{-k-1} \sqrt{1-\beta ^2} \Gamma \left(k+1,a \pi  \left(\alpha +i \sqrt{1-\alpha ^2}\right)\right)\right.\\
   \left.+e^{a \pi  \left(\alpha +i \sqrt{1-\alpha
   ^2}\right)} (k+1) \left(\alpha +i \sqrt{1-\alpha ^2}\right)^{-k-1} \beta  \sqrt{1-\beta ^2} \Gamma \left(k+1,a \pi  \left(\alpha +i \sqrt{1-\alpha ^2}\right)\right)\right.\\
   \left.+e^{a \pi 
   \left(\beta -i \sqrt{1-\beta ^2}\right)} (k+1) \sqrt{1-\alpha ^2} \beta  \left(\beta -i \sqrt{1-\beta ^2}\right)^{-k-1} \Gamma \left(k+1,a \pi  \left(\beta -i \sqrt{1-\beta
   ^2}\right)\right)\right.\\
   \left.+e^{a \pi  \left(\beta -i \sqrt{1-\beta ^2}\right)} (k+1) \alpha  \sqrt{1-\alpha ^2} \left(\beta -i \sqrt{1-\beta ^2}\right)^{-k-1} \Gamma \left(k+1,a \pi  \left(\beta
   -i \sqrt{1-\beta ^2}\right)\right)\right.\\
   \left.-e^{a \pi  \left(\beta +i \sqrt{1-\beta ^2}\right)} (k+1) \sqrt{1-\alpha ^2} \beta  \left(\beta +i \sqrt{1-\beta ^2}\right)^{-k-1} \Gamma \left(k+1,a
   \pi  \left(\beta +i \sqrt{1-\beta ^2}\right)\right)\right.\\
   \left.-e^{a \pi  \left(\beta +i \sqrt{1-\beta ^2}\right)} (k+1) \alpha  \sqrt{1-\alpha ^2} \left(\beta +i \sqrt{1-\beta ^2}\right)^{-k-1}
   \Gamma \left(k+1,a \pi  \left(\beta +i \sqrt{1-\beta ^2}\right)\right)\right.\\
   \left.-e^{a \pi  \left(\alpha +i \sqrt{1-\alpha ^2}\right)} \left(\alpha +i \sqrt{1-\alpha ^2}\right)^{-k-2}
   \sqrt{1-\beta ^2} \Gamma \left(k+2,a \pi  \left(\alpha +i \sqrt{1-\alpha ^2}\right)\right)\right.\\
   \left.-e^{a \pi  \left(\beta -i \sqrt{1-\beta ^2}\right)} \sqrt{1-\alpha ^2} \left(\beta -i
   \sqrt{1-\beta ^2}\right)^{-k-2} \Gamma \left(k+2,a \pi  \left(\beta -i \sqrt{1-\beta ^2}\right)\right)\right.\\
   \left.+e^{a \pi  \left(\beta +i \sqrt{1-\beta ^2}\right)} \sqrt{1-\alpha ^2} \left(\beta
   +i \sqrt{1-\beta ^2}\right)^{-k-2} \Gamma \left(k+2,a \pi  \left(\beta +i \sqrt{1-\beta ^2}\right)\right)\right)
\end{multline}
\end{theorem}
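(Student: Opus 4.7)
The plan is to combine the Chebyshev contour identity \eqref{eq:lhsci} with the twelve incomplete gamma contour integral representations \eqref{eq:rhs1}--\eqref{eq:rhs12} derived in Section~4. The guiding observation is that the rational kernel in the Cauchy integral of \eqref{eq:cheby_contour} admits a partial fraction decomposition whose pieces are exactly the integrands appearing on the right-hand sides of those twelve equations.

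First, I would carry out the partial fraction decomposition of the kernel. Writing $(1-\alpha w)(1-\beta w) = 1 - (\alpha+\beta)w + \alpha\beta w^2$ and distributing against $w^{-k-1}$ yields
\[
\frac{a^w w^{-k-1}(1-\alpha w)(1-\beta w)}{(w^2-2\alpha w+1)(w^2-2\beta w+1)} = \frac{a^w w^{-k-1}-(\alpha+\beta)\,a^w w^{-k}+\alpha\beta\, a^w w^{-k+1}}{(w^2-2\alpha w+1)(w^2-2\beta w+1)}.
\]
The quartic denominator factors through the four simple roots $\alpha\pm i\sqrt{1-\alpha^2}$ and $\beta\pm i\sqrt{1-\beta^2}$, so each of the three numerators further splits into four simple $\frac{1}{w-r}$ terms. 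A direct residue calculation shows the resulting twelve coefficients are precisely the prefactors $\pm\frac{i}{4\sqrt{1-\alpha^2}(\alpha-\beta)}$, $\pm\frac{i}{4\sqrt{1-\beta^2}(\alpha-\beta)}$ together with their $(\alpha+\beta)$-scaled and $\alpha\beta$-scaled variants that were chosen when constructing \eqref{eq:rhs1}--\eqref{eq:rhs12}.

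Next, I would add the twelve identities \eqref{eq:rhs1}--\eqref{eq:rhs12}. By construction of the coefficients, their right-hand sides sum to the Cauchy integral on the last line of \eqref{eq:lhsci}, so their left-hand sides sum to the same quantity. Combining this with \eqref{eq:lhsci} equates the double series $\sum_{n,p\ge 0} T_n(\alpha)T_p(\beta)\log^{k-n-p}(a)/\Gamma(k-n-p+1)$ with the sum of the twelve incomplete gamma expressions. To reach the exact form of the theorem, I would then substitute $a\mapsto e^{a\pi}$ (so that $\log(a)\mapsto a\pi$), multiply through by $\Gamma(k)/(a\pi)^k$, and apply the Pochhammer identity $(k)_{-n-p+1}=\Gamma(k-n-p+1)/\Gamma(k)$ on the left. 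The overall factor $1/[k(k+1)]$ on the right comes out through $\Gamma(k)/\Gamma(k+2)=1/[k(k+1)]$ and $\Gamma(k)/\Gamma(k+1)=1/k$, which in passing produce the $(k+1)$ and $k(k+1)$ weights attached to the $\Gamma(k+1,\cdot)$ and $\Gamma(k,\cdot)$ terms.

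The main obstacle is the algebraic bookkeeping in the partial fraction step: one must verify term-by-term that the twelve residue weights agree in both sign and magnitude with the prefactors listed in \eqref{eq:rhs1}--\eqref{eq:rhs12}, and that the three power groupings $w^{-k-2},w^{-k-1},w^{-k}$ slot correctly into the $\Gamma(k+2,\cdot),\Gamma(k+1,\cdot),\Gamma(k,\cdot)$ factors on the right. No analytic subtlety arises beyond this: convergence of the double series is already enforced by the hypotheses $|Re(w)|<1$ and $Im(w)>0$ of Section~2, and the Tonelli exchange of summation and integration has already been justified there.
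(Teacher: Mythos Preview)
Your proposal is correct and follows essentially the same route as the paper: equate the contour integral of \eqref{eq:lhsci} with the sum of the twelve contour integrals \eqref{eq:rhs1}--\eqref{eq:rhs12} via the partial fraction decomposition of the rational kernel, then substitute $a\to e^{a\pi}$ and rewrite the Gamma quotients using the Pochhammer symbol. The only difference is that you spell out explicitly how the three power groupings $w^{-k-2},w^{-k-1},w^{-k}$ arise from expanding $(1-\alpha w)(1-\beta w)$ and how the factor $1/[k(k+1)]$ emerges from $\Gamma(k)/\Gamma(k+2)$, whereas the paper states this as a single simplification step.
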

\begin{proof}
Observe the right-hand side of equation (\ref{eq:lhsci}) is equivalent to the addition of the right-hand sides of equations (\ref{eq:rhs1}) to (\ref{eq:rhs12}) then the left-hand sides are equal. We then apply equation (\ref{eq:cheby_contour}) and replace $a\to e^{a\pi}$ and simplify the Gamma function and use the Pochhammer symbol see equation (5.2.5) in \cite{dlmf} to yield the stated result.
\end{proof}
\begin{proposition}
\begin{equation}
\sum_{n,p \geq 0}\frac{1}{(a \pi )^{n+p} (k)_{1-n-p}}=1+\frac{1}{k}+e^{a \pi } (1+k-a \pi ) E_{1-k}(a \pi )
\end{equation}
\end{proposition}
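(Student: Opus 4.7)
The plan is to specialize equation~(\ref{eq:cheby_contour}) at $\alpha=\beta=1$ directly, bypassing the twelve-term partial-fraction decomposition used for the main theorem, since at these values the two quadratic factors coincide and collapse the rational function on the right-hand side dramatically. Because $T_{n}(1)=T_{p}(1)=1$, the left-hand side of~(\ref{eq:lhsci}) after the substitution $a\to e^{a\pi}$ reduces to $\sum_{n,p\geq 0}(a\pi)^{k-n-p}/\Gamma(k-n-p+1)$, which via the Pochhammer identity $(k)_{1-n-p}=\Gamma(k+1-n-p)/\Gamma(k)$ equals $(a\pi)^{k}/\Gamma(k)$ times the sum appearing in the proposition. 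The right-hand side integrand collapses through $w^{2}-2w+1=(w-1)^{2}$ and $(1-w)^{2}/(w-1)^{4}=1/(w-1)^{2}$ to $a^{w}w^{-k-1}/(w-1)^{2}$.

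Next I would evaluate the single contour integral $\frac{1}{2\pi i}\int_{C}a^{w}w^{-k-1}/(w-1)^{2}\,dw$ by differentiating~(\ref{eq:igf}) in the parameter~$x$. Since $\partial_{x}(w+x)^{-1}=-(w+x)^{-2}$, applying $-\partial_{x}$ to both sides of~(\ref{eq:igf}) expresses the desired integral as $\partial_{x}F(x)\big|_{x=-1}$ where $F(x):=a^{-x}(-x)^{-k-1}\Gamma(k+1,-x\log a)/\Gamma(k+1)$. Using $\partial_{z}\Gamma(k+1,z)=-z^{k}e^{-z}$ together with the product rule on the three $x$-dependent factors of $F$, the evaluation at $x=-1$ yields
\[
\frac{1}{2\pi i}\int_{C}\frac{a^{w}w^{-k-1}}{(w-1)^{2}}\,dw=\frac{a(k+1-\log a)\Gamma(k+1,\log a)+(\log a)^{k+1}}{\Gamma(k+1)}.
\]

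Substituting $a\to e^{a\pi}$, equating the two specialized sides of~(\ref{eq:cheby_contour}), and multiplying through by $\Gamma(k)/(a\pi)^{k}$ to convert the sum to its Pochhammer form identifies the proposition's left-hand side with $[e^{a\pi}(k+1-a\pi)\Gamma(k+1,a\pi)+(a\pi)^{k+1}]/[k(a\pi)^{k}]$. The final simplification applies the recurrence $\Gamma(k+1,z)=k\Gamma(k,z)+z^{k}e^{-z}$: the $z^{k}e^{-z}$ piece combines with the stray $(a\pi)^{k+1}$ term to produce the elementary contribution $(k+1-a\pi)/k+a\pi/k=1+1/k$, while the remaining $e^{a\pi}(k+1-a\pi)\Gamma(k,a\pi)/(a\pi)^{k}$ rewrites as $e^{a\pi}(1+k-a\pi)E_{1-k}(a\pi)$ via the standard identity $E_{1-k}(z)=z^{-k}\Gamma(k,z)$.

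The main obstacle is the bookkeeping in the product-rule derivative, particularly the sign in $\partial_{x}(-x)^{-k-1}=(k+1)(-x)^{-k-2}$ and the chain-rule contribution $\partial_{x}\Gamma(k+1,-x\log a)=(\log a)(-x\log a)^{k}e^{x\log a}$; once these are in hand, the clean $1+1/k$ emerging as the ``boundary'' part of the $\Gamma(k+1,z)$ recurrence provides a useful consistency check that no terms have been dropped.
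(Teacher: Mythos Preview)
Your proof is correct and takes a genuinely different route from the paper. The paper derives the proposition by starting from the full sixteen-term identity of Theorem~5.1 and applying l'H\^opital's rule in the singular limit $\alpha\to 1$, $\beta\to 1$, where the factors $\sqrt{1-\alpha^{2}}$ and $\sqrt{1-\beta^{2}}$ in the denominator vanish. You instead specialize $\alpha=\beta=1$ \emph{before} any partial-fraction decomposition, collapsing the rational function in~(\ref{eq:cheby_contour}) to $a^{w}w^{-k-1}/(w-1)^{2}$, and then obtain this single contour integral by one $x$-derivative of the master identity~(\ref{eq:igf}); the double pole at $w=1$ arises naturally as the confluence of the two simple poles $\alpha\pm i\sqrt{1-\alpha^{2}}$, and differentiating in $x$ is precisely how one handles such a confluent pole. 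Your route is more economical---it bypasses both the twelve-piece decomposition and the l'H\^opital computation---while the paper's route has the virtue of exhibiting the proposition as a direct corollary of the main theorem, verifying that the complicated right-hand side of~(\ref{eq:cheby}) does collapse correctly in the limit. Both arguments rest on the same contour-integral framework of Sections~1--2 and equation~(\ref{eq:igf}); they differ only in whether the specialization happens before or after the partial-fraction step.
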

\begin{proof}
Use equation (\ref{eq:cheby}) and apply l'Hopital's rule as $\alpha \to 1, \beta \to 1$ and simplify using equations (18.5.14) and (8.19.1) in \cite{dlmf}.
\end{proof}
\begin{proposition}
\begin{multline}\label{eq:prod_cos}
\sum_{n,p \geq 0}\frac{\cos (n \alpha ) \cos (p \beta )}{(a \pi )^{n+p} (k)_{1-n-p}}\\
=\frac{1}{4 k i (1+k) (a \pi
   )^k (\cos (\alpha )-\cos (\beta ))}\left(e^{a e^{-i \alpha } \pi }
   \left(e^{-i \alpha }\right)^{-k} k (1+k) \cos (\beta ) \cot (\alpha ) \Gamma \left(k,a e^{-i \alpha } \pi
   \right)\right.\\
   \left.-e^{a e^{i \alpha } \pi } \left(e^{i \alpha }\right)^{-k} k (1+k) \cos (\beta ) \cot (\alpha ) \Gamma
   \left(k,a e^{i \alpha } \pi \right)\right.\\
   \left.-k (1+k) \cos (\alpha ) \cot (\beta ) \left(e^{a e^{-i \beta } \pi } \left(e^{i
   \beta }\right)^k \Gamma \left(k,a e^{-i \beta } \pi \right)-e^{a e^{i \beta } \pi } \left(e^{-i \beta }\right)^k
   \Gamma \left(k,a e^{i \beta } \pi \right)\right)\right.\\
   \left.-e^{a e^{-i \alpha } \pi +i \alpha } \left(e^{-i \alpha
   }\right)^{-k} (1+k) (\cos (\alpha )+\cos (\beta )) \csc (\alpha ) \Gamma \left(1+k,a e^{-i \alpha } \pi
   \right)\right.\\
   \left.+e^{a e^{i \alpha } \pi -i \alpha } \left(e^{i \alpha }\right)^{-k} (1+k) (\cos (\alpha )+\cos (\beta ))
   \csc (\alpha ) \Gamma \left(1+k,a e^{i \alpha } \pi \right)\right.\\
   \left.+e^{a e^{-i \beta } \pi +i \beta } \left(e^{i \beta
   }\right)^k (1+k) (\cos (\alpha )+\cos (\beta )) \csc (\beta ) \Gamma \left(1+k,a e^{-i \beta } \pi \right)\right.\\
   \left.-e^{a
   e^{i \beta } \pi -i \beta } \left(e^{-i \beta }\right)^k (1+k) (\cos (\alpha )+\cos (\beta )) \csc (\beta ) \Gamma
   \left(1+k,a e^{i \beta } \pi \right)\right.\\
   \left.+e^{a e^{-i \alpha } \pi +2 i \alpha } \left(e^{-i \alpha }\right)^{-k} \csc
   (\alpha ) \Gamma \left(2+k,a e^{-i \alpha } \pi \right)-e^{a e^{i \alpha } \pi -2 i \alpha } \left(e^{i \alpha
   }\right)^{-k} \csc (\alpha ) \Gamma \left(2+k,a e^{i \alpha } \pi \right)\right.\\
   \left.-e^{a e^{-i \beta } \pi +2 i \beta }
   \left(e^{i \beta }\right)^k \csc (\beta ) \Gamma \left(2+k,a e^{-i \beta } \pi \right)+e^{a e^{i \beta } \pi -2 i
   \beta } \left(e^{-i \beta }\right)^k \csc (\beta ) \Gamma \left(2+k,a e^{i \beta } \pi \right)\right)
\end{multline}
\end{proposition}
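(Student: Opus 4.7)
The plan is to obtain this identity as a direct specialization of the main theorem (equation~\eqref{eq:cheby}) by using the classical evaluation $T_n(\cos\theta)=\cos(n\theta)$. Concretely, I would substitute $\alpha\to\cos(\alpha)$ and $\beta\to\cos(\beta)$ throughout \eqref{eq:cheby}. Under this substitution the left-hand side sum $\sum T_n(\alpha)T_p(\beta)/((a\pi)^{n+p}(k)_{1-n-p})$ becomes exactly $\sum \cos(n\alpha)\cos(p\beta)/((a\pi)^{n+p}(k)_{1-n-p})$, so the task reduces to showing that the right-hand side of \eqref{eq:cheby} simplifies, under this substitution, to the right-hand side of \eqref{eq:prod_cos}.

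The core simplifications I would carry out are:
\begin{enumerate}
\item Replace the radicals via $\sqrt{1-\cos^2(\alpha)}=\sin(\alpha)$ and $\sqrt{1-\cos^2(\beta)}=\sin(\beta)$ (choosing the appropriate branch so that the arguments of the incomplete gamma functions match those in \eqref{eq:prod_cos}).
\item Use Euler's formula $\cos(\theta)\pm i\sin(\theta)=e^{\pm i\theta}$ to collapse the recurring expressions $\alpha\pm i\sqrt{1-\alpha^2}$ and $\beta\pm i\sqrt{1-\beta^2}$ into $e^{\pm i\alpha}$ and $e^{\pm i\beta}$ respectively. This converts $\left(\alpha\pm i\sqrt{1-\alpha^2}\right)^{-k-j}$ into $e^{\mp i(k+j)\alpha}$, matching the exponential prefactors $a e^{\pm i\alpha}\pi$ appearing inside the $\Gamma(k+j,\cdot)$ terms in \eqref{eq:prod_cos}.
\item Replace the denominator factor $\alpha-\beta$ by $\cos(\alpha)-\cos(\beta)$, and identify the recurring combinations $\cos(\alpha)/\sin(\alpha)=\cot(\alpha)$, $1/\sin(\alpha)=\csc(\alpha)$, and similarly for $\beta$, which is exactly the trigonometric packaging used in \eqref{eq:prod_cos}.
\end{enumerate}

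After these substitutions, each of the 16 terms in \eqref{eq:cheby} pairs off into one of the 16 terms of \eqref{eq:prod_cos}: the four terms containing $\Gamma(k+2,\cdot)$ collect into the $\csc(\alpha)\Gamma(2+k,\cdot)$ and $\csc(\beta)\Gamma(2+k,\cdot)$ pairs; the eight terms containing $\Gamma(k+1,\cdot)$ combine (after factoring out $(\cos(\alpha)+\cos(\beta))$, which arises from grouping the $\alpha$ and $\beta$ linear coefficients) into the four $\Gamma(1+k,\cdot)$ terms involving $\csc(\alpha),\csc(\beta)$; and the four terms containing $\Gamma(k,\cdot)$ produce the four $\cot(\alpha),\cot(\beta)$ terms. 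The sign patterns align once one tracks the $\pm i$ choices consistently via Euler's formula.

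The main obstacle will be the bookkeeping: matching the $16$ terms of \eqref{eq:cheby} term-by-term with the $16$ terms of \eqref{eq:prod_cos}, carefully tracking the signs produced by the two square-root branches and the phases $e^{\pm i(k+j)\alpha}$, and verifying that the grouping of the eight $\Gamma(k+1,\cdot)$ terms indeed produces the symmetric combination $(\cos(\alpha)+\cos(\beta))$. No new analytic input is required beyond \eqref{eq:cheby} and elementary trigonometric identities; the proof is therefore a careful (but purely algebraic) substitution, and I would record it as such, invoking the main theorem and the identity $T_n(\cos\theta)=\cos(n\theta)$ (equation (18.5.1) in \cite{dlmf}) as the only external ingredient.
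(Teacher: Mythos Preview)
Your approach is exactly the one the paper uses: substitute $\alpha\to\cos(\alpha)$, $\beta\to\cos(\beta)$ in the main theorem \eqref{eq:cheby}, invoke $T_n(\cos\theta)=\cos(n\theta)$ (equation (18.5.1) in \cite{dlmf}) for the left-hand side, and simplify the right-hand side via Euler's formula. Your additional breakdown of the term-by-term matching is more detailed than the paper's one-line proof but adds nothing new methodologically.
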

\begin{proof}
Use equation (\ref{eq:cheby}) replace $\alpha \to \cos (\alpha ),\beta \to \cos (\beta )$ and simplify the left-hand side using equation (18.5.1) in \cite{dlmf}.
\end{proof}
\begin{example}
Product of Cosine Functions in terms of the Complementary Error Function $\textup{erfc}(z)$ and error function $\textup{erf}(z)$.
\begin{multline}
\sum_{n,p \geq 0}\frac{e^{-4 (n+p)} \cos \left(\frac{\pi  n}{2}\right) \cos \left(\frac{\pi 
   p}{4}\right)}{\left(-\frac{1}{2}\right)_{-n-p+1}}\\
   =\left(\frac{1}{4}+\frac{i}{4}\right) e^2 \sqrt{\pi }
   \left(\left(2+i \sqrt{2}\right) e^{-i e^4} \left(\text{erf}\left((-1)^{3/4} e^2\right)+1\right)\right.\\
   \left.+2 (-1)^{7/8}
   e^{-(-1)^{3/4} e^4} \left(\text{erf}\left((-1)^{7/8} e^2\right)+1\right)\right.\\
   \left.+2 (-1)^{5/8} e^{\sqrt[4]{-1} e^4}
   \text{erfc}\left(\sqrt[8]{-1} e^2\right)-\left(\sqrt{2}+2 i\right) e^{i e^4} \text{erfc}\left(\sqrt[4]{-1}
   e^2\right)\right)
\end{multline}
\end{example}
\begin{proof}
Use equation (\ref{eq:prod_cos}) set $k=-1/2,a=e^4/\pi,\alpha =\pi/2,\beta =\pi/4$ and simplify the right-hand side using equations (8.4.1) and (8.4.6) in \cite{dlmf}.
\end{proof}
\begin{example}
The Golden Ratio.
\begin{multline}
\sum_{n,p \geq 0}\frac{T_n\left(\sqrt{5}\right) T_p\left(\frac{\sqrt{5}}{2}\right)}{(a \pi )^{n+p}
   (k)_{1-n-p}}\\
   =\frac{1}{k}+\frac{1}{4 \sqrt{5}}\left(\left(4+\sqrt{5}\right) e^{\left(-2+\sqrt{5}\right) a \pi }
   E_{1-k}\left(\left(-2+\sqrt{5}\right) a \pi \right)\right.\\\left.+\left(-1+\sqrt{5}\right) e^{\frac{1}{2}
   \left(-1+\sqrt{5}\right) a \pi } E_{1-k}\left(\frac{1}{2} \left(-1+\sqrt{5}\right) a \pi
   \right)\right.\\\left.+\left(1+\sqrt{5}\right) e^{\frac{1}{2} \left(1+\sqrt{5}\right) a \pi } E_{1-k}\left(\frac{1}{2}
   \left(1+\sqrt{5}\right) a \pi \right)\right.\\\left.+\left(-4+\sqrt{5}\right) e^{\left(2+\sqrt{5}\right) a \pi }
   E_{1-k}\left(\left(2+\sqrt{5}\right) a \pi \right)\right)
\end{multline}
\end{example}
\begin{proof}
Use equation (\ref{eq:prod_cos}) set $\alpha \to \sqrt{5},\beta \to \frac{\sqrt{5}}{2}$ and simplify the right-hand side using equations (8.4.1) and (8.4.6) in \cite{dlmf}.
\end{proof}
\section{The Limiting Case Of The Difference Of A Negative Index}
In this section we will derive a few generating functions using the identity $T_{n}(-x)=-1^nT_{n}(x)$ which is listed in Table (18.6.1) in \cite{dlmf}. We proceed by using equation (\ref{eq:cheby}) and forming a second equation by replacing $\alpha \to \alpha, \beta\to -\beta$ taking their difference and simplifying. Next we evaluate five cases listed below when $\alpha=\beta=1,\alpha=\beta=2, \alpha=\beta=3, \alpha=\beta=4, \alpha=\beta=5$. In order to simplify the right-hand sides of these formulae we apply the limit and simplify. The simplification process is not very easy and tedious, however the results are very inetersting.
\begin{example}
\begin{multline}
\sum_{n,p \geq 0}\frac{-1+(-1)^{n+p}}{(a \pi )^{n+p} (k)_{1-n-p}}=\frac{e^{-(a+i k) \pi }}{(a \pi )^k k} \left(k (1+k+a \pi ) \Gamma (k,-a \pi
   )\right.\\
   \left.+e^{a \pi } \left(\left((-a)^k-a^k e^{i k \pi }\right) (1+k) \pi ^k-e^{(a+i k) \pi } k (1+k-a \pi ) \Gamma (k,a \pi
   )\right)\right)
\end{multline}
\end{example}
%
%
\begin{example}
\begin{multline}
\sum_{n,p \geq 0}\frac{\left(-1+(-1)^{n+p}\right) T_n(2) T_p(2)}{(a \pi )^{n+p} (k)_{1-n-p}}\\
=\frac{a^{-k}
   e^{-\left(\left(2+\sqrt{3}\right) a+i k\right) \pi } }{12 k}
   \left(6 e^{\left(2+\sqrt{3}\right) a \pi }
   \left((-a)^k-a^k e^{i k \pi }\right) (2+k)\right.\\\left.+k \left(-a^k e^{4 a \pi +i k \pi } \left(6+2 \sqrt{3}+3 k+3
   \left(-2+\sqrt{3}\right) a \pi \right) E_{1-k}\left(-\left(\left(-2+\sqrt{3}\right) a \pi \right)\right)\right.\right.\\
   \left.\left.+(-a)^k
   e^{2 \sqrt{3} a \pi } \left(6+2 \sqrt{3}+3 k-3 \left(-2+\sqrt{3}\right) a \pi \right)
   E_{1-k}\left(\left(-2+\sqrt{3}\right) a \pi \right)\right.\right.\\ \left.\left.+(-a)^k \left(6-2 \sqrt{3}+3 k+3 \left(2+\sqrt{3}\right) a \pi
   \right) E_{1-k}\left(-\left(\left(2+\sqrt{3}\right) a \pi \right)\right)\right.\right.\\ \left.\left.+a^k e^{\left(2 \left(2+\sqrt{3}\right) a+i
   k\right) \pi } \left(-6+2 \sqrt{3}-3 k+3 \left(2+\sqrt{3}\right) a \pi \right) E_{1-k}\left(\left(2+\sqrt{3}\right)
   a \pi \right)\right)\right)
\end{multline}
\end{example}
%
%
\begin{example}
\begin{multline}
\sum_{n,p \geq 0}\frac{\left(-1+(-1)^{n+p}\right) T_n(3) T_p(3)}{(a \pi )^{n+p} (k)_{1-n-p}}\\
=\frac{1}{16 k}\left(-3+2
   \sqrt{2}\right)^{-2 k} a^{-k} \left(\left(-3+2 \sqrt{2}\right) a\right)^{-k}
    e^{\left(-\left(\left(3+2
   \sqrt{2}\right) a\right)+2 i k\right) \pi }\\
    \left(-\left(\left(3+2 \sqrt{2}\right) \pi \right)\right)^{-k} \left(8
   \left(3-2 \sqrt{2}\right)^{2 k} (-a)^k e^{\left(3+2 \sqrt{2}\right) a \pi } \left((-a)^k-a^k e^{i k \pi }\right)
   (2+k) \pi ^k\right.\\ \left.+\left(3-2 \sqrt{2}\right)^{3 k} (-a)^k k \left(8-3 \sqrt{2}+4 k+4 \left(3+2 \sqrt{2}\right) a \pi
   \right) \Gamma \left(k,-\left(\left(3+2 \sqrt{2}\right) a \pi \right)\right)\right.\\ \left.+\left(\left(-3+2 \sqrt{2}\right)
   a\right)^k k \left(-e^{6 a \pi +i k \pi } \left(8+3 \sqrt{2}+4 k+4 \left(-3+2 \sqrt{2}\right) a \pi \right) \Gamma
   \left(k,\left(3-2 \sqrt{2}\right) a \pi \right)\right.\right.\\ \left.\left.+e^{4 \sqrt{2} a \pi } \left(\left(8+3 \sqrt{2}+4 k+4 \left(3-2
   \sqrt{2}\right) a \pi \right) \Gamma \left(k,\left(-3+2 \sqrt{2}\right) a \pi \right)\right.\right.\right.\\ \left.\left.\left.+\left(3-2 \sqrt{2}\right)^{2
   k} e^{6 a \pi +i k \pi } \left(-8+3 \sqrt{2}-4 k+4 \left(3+2 \sqrt{2}\right) a \pi \right) \Gamma \left(k,\left(3+2
   \sqrt{2}\right) a \pi \right)\right)\right)\right)
\end{multline}
\end{example}
%
%
\begin{example}
\begin{multline}
\sum_{n,p \geq 0}\frac{\left(-1+(-1)^{n+p}\right) T_n(4) T_p(4)}{(a \pi )^{n+p} (k)_{1-n-p}}\\
=\frac{a^{-k}
   e^{-\left(\left(4+\sqrt{15}\right) a \pi \right)} \left(\left(-4+\sqrt{15}\right) \pi \right)^{-k}}{60 k}
    \left(30
   e^{\left(4+\sqrt{15}\right) a \pi } \left((-a)^k-a^k e^{i k \pi }\right) (2+k) \left(\left(4-\sqrt{15}\right) \pi
   \right)^k\right.\\ \left.-e^{8 a \pi +i k \pi } k \left(30+4 \sqrt{15}+15 k+15 \left(-4+\sqrt{15}\right) a \pi \right) \Gamma
   \left(k,-\left(\left(-4+\sqrt{15}\right) a \pi \right)\right)\right.\\ \left.+e^{2 \sqrt{15} a \pi } k \left(30+4 \sqrt{15}+15 k-15
   \left(-4+\sqrt{15}\right) a \pi \right) \Gamma \left(k,\left(-4+\sqrt{15}\right) a \pi \right)\right.\\ \left.+\left(31-8
   \sqrt{15}\right)^k k \left(\left(30-4 \sqrt{15}+15 k+15 \left(4+\sqrt{15}\right) a \pi \right) \Gamma
   \left(k,-\left(\left(4+\sqrt{15}\right) a \pi \right)\right)\right.\right.\\ \left.\left.+e^{\left(2 \left(4+\sqrt{15}\right) a+i k\right) \pi }
   \left(-30+4 \sqrt{15}-15 k+15 \left(4+\sqrt{15}\right) a \pi \right) \Gamma \left(k,\left(4+\sqrt{15}\right) a \pi
   \right)\right)\right)
\end{multline}
\end{example}
%
%
\begin{example}
\begin{multline}
\sum_{n,p \geq 0}\frac{\left(-1+(-1)^{n+p}\right) T_n(5) T_p(5)}{(a \pi )^{n+p} (k)_{1-n-p}}\\
=\frac{\left(-5+2
   \sqrt{6}\right)^{-2 k} a^{-k} \left(\left(-5+2 \sqrt{6}\right) a\right)^{-k} }{48 k}
    e^{\left(-\left(\left(5+2
   \sqrt{6}\right) a\right)+2 i k\right) \pi } \left(-\left(\left(5+2 \sqrt{6}\right) \pi \right)\right)^{-k} \\\left(24
   \left(5-2 \sqrt{6}\right)^{2 k}
    (-a)^k e^{\left(5+2 \sqrt{6}\right) a \pi } \left((-a)^k-a^k e^{i k \pi }\right)
   (2+k) \pi ^k+\left(5-2 \sqrt{6}\right)^{3 k} \right.\\ \left.(-a)^k k \left(24-5 \sqrt{6}+12 k+12 \left(5+2 \sqrt{6}\right) a \pi
   \right) \Gamma \left(k,-\left(\left(5+2 \sqrt{6}\right) a \pi \right)\right)\right.\\ \left.+\left(\left(-5+2 \sqrt{6}\right)
   a\right)^k k \left(-e^{10 a \pi +i k \pi } \left(24+5 \sqrt{6}+12 k+12 \left(-5+2 \sqrt{6}\right) a \pi \right)\right.\right.\\ \left.\left.
   \Gamma \left(k,\left(5-2 \sqrt{6}\right) a \pi \right)+e^{4 \sqrt{6} a \pi } \left(\left(24+5 \sqrt{6}+12 k+12
   \left(5-2 \sqrt{6}\right) a \pi \right)\right.\right.\right.\\ \left.\left.\left. \Gamma \left(k,\left(-5+2 \sqrt{6}\right) a \pi \right)+\left(5-2
   \sqrt{6}\right)^{2 k} e^{10 a \pi +i k \pi } \left(-24+5 \sqrt{6}-12 k +12 \left(5+2 \sqrt{6}\right) a \pi \right)\right.\right.\right.\\ \left.\left.\left.
   \Gamma \left(k,\left(5+2 \sqrt{6}\right) a \pi \right)\right)\right)\right)
\end{multline}
\end{example}
%
%
\section{Conclusion}
In this paper, we have presented a method for deriving a generating function involving the product of two Chebyshev polynomials over independent parameter ranges along with some interesting special cases using contour integration. The results presented were numerically verified for both real and imaginary and complex values of the parameters in the integrals using Mathematica by Wolfram.

\begin{thebibliography}{999}
%
\bibitem{dlmf} Olver, F.W.J.; Lozier, D.W.; Boisvert, R.F.; Clark, C.W. (Eds.)
 \emph{NIST Digital Library of Mathematical Functions}; U.S. Department of Commerce, National Institute of Standards and Technology: Washington, DC, USA; Cambridge University Press: Cambridge, UK, 2010; With 1 CD-ROM (Windows, Macintosh and UNIX). MR 2723248 (2012a:33001).
%

 \bibitem{lanczos}Cornelius Lanczos, 
 \emph{Applied analysis}, Prentice Hall Inc., Englewood Cliffs, N. J., 1956xx+539 MR0084175
 %
 
  \bibitem{mason}Mason, J. C. 
 \emph{Chebyshev Polynomial Approximations for the L-Membrane Eigenvalue Problem},  SIAM Journal on Applied Mathematics 15, no. 1 (1967): 172–86. http://www.jstor.org/stable/2946162.
 %
 
 \bibitem{reyn4} Reynolds, R.; Stauffer, A.
{A Method for Evaluating Definite Integrals in Terms of Special Functions with Examples}.  \emph{Int. Math. Forum} \textbf{2020}, \emph{15}, 235--244, doi:10.12988/imf.2020.91272 
%

\bibitem{gelca} Gelca, Răzvan., Andreescu, Titu. 
\emph{Putnam and Beyond}. Germany: Springer International Publishing, 2017.
 %
 
 \bibitem{grad} Gradshteyn, I.S.; Ryzhik, I.M.
\emph{Tables of Integrals, Series and Products}, 6th ed.; Academic Press: Cambridge, MA, USA, 
 2000.
%
\end{thebibliography}
\end{document}